\theoremstyle{plain}
\newtheorem{thm}{\protect\theoremname}
\theoremstyle{plain}
\newtheorem{lem}[thm]{\protect\lemmaname}
\theoremstyle{plain}
 \def\ve{\varepsilon}
 \newcommand{\pa}{\partial}
  \newcommand{\R}{\mathbb{R}}
  \newcommand{\B}{\mathbb{B}}
   \newcommand{\ud}{\mathrm{d}}
\newcommand{\Sn}{\mathbb{S}^n}
\newcommand{\Sp}{\mathbb{S}}
\providecommand{\corollaryname}{Corollary}
\providecommand{\lemmaname}{Lemma}
\providecommand{\theoremname}{Theorem}
\begin{document}
\title[]{Conformal metrics of the ball with constant $\sigma_k$-curvature and constant boundary mean curvature}

\author{Xuezhang  Chen, Wei Wei}
\address{School of Mathematics \& IMS, Nanjing University, Nanjing 210093, P.R. China}
\email{xuezhangchen@nju.edu.cn,wei\_wei@nju.edu.cn}
\thanks{X. Chen is partially supported by NSFC (No.11771204). W. Wei is partially supported by NSFC No. 12201288, NSFJS No.BK20220755 and the Alexander von Humboldt Foundation.}

\begin{abstract}
On the upper hemisphere, we use the Obata-Escobar argument to classify
conformal metrics with constant $\sigma_k$ curvature and constant boundary mean
curvature in all types of cones including positive and negative
cones. This extends a result of Escobar in \cite{Es} for $k=1$.
\end{abstract}

\maketitle

\section{Introduction}

  In 1990,  Escobar initiated the study of an analogue of the Obata's geometric rigidity problem on closed  manifolds:  For a smooth compact manifold with boundary, under what conditions can an Einstein manifold be deformed conformally and still remain an Einstein manifold? An affirmative and complete  answer toward this problem for the ball can be found in \cite[Theorem 2.1]{Es}:
\emph{Let $g$ be a conformal metric on the unit Euclidean ball $(\mathbb{B}^{n+1},|\ud x|^{2})$ for $n\ge2$. If $g$ has constant scalar curvature $R_{g}$ and constant boundary mean
curvature $H_{g}$, then $g$ is Einstein.} However,  some cases in manifolds with boundary still remain open.

On a smooth Riemannian manifold $(M^{n+1},g)$, denote by
\[
A_{g}:=\frac{1}{n-1}\Big(\mathrm{Ric}_{g}-\frac{R_{g}}{2n}g\Big)
\]
the Schouten tensor, where $\mathrm{Ric}_{g}$ and $R_{g}$ are the Ricci tensor and the scalar
curvature in $g$, respectively. In the context of conformally invariant elliptic equations, $\sigma_{k}(A_{g})$
is one of the important objects and has played an important
role in conformal geometry, fully nonlinear equations and  mathematical general relativity. The $\sigma_{k}$ curvature  is always associated with a cone $\Gamma_{k}^{+}$ for ellipticity:
\[
\Gamma_{k}^{+}=\{\lambda\in \R^{n+1}; \sigma_{1}(\lambda)>0,\cdots, \sigma_{k}(\lambda)>0\}.
\]
Without confusion, we short $A_{g}\in\Gamma_{k}^{+}$ for $\lambda(A_{g})\in\Gamma_{k}^{+}$ while reserving $k$ for those in $\sigma_k$ and some quantities relative to $\sigma_k$.

Under the hypothesis that $|x|^{1-n}u(|x|^{-2}x)$ can be extended
to a positive $C^{2}$ function near $x=0$ for $2\le k\le n$, Viaclovsky
\cite{V1,V2} obtained the Liouville theorem for constant $\sigma_{k}(A_{g})$
in $\Gamma_{k}^{+}$. Concerning $k=2$ and $n=3$, Chang-Gursky-Yang
\cite{CGY1,CGY3} utilized the Obata argument for the entire space. For $n=3$,
the second named author with Fang and Ma \cite{FMW} introduced a
monotone formula with respect to level set of the solution, and then
proved a Liouville theorem for some more general $\sigma_{2}$-type curvature
equation including $\sigma_{2}$ Yamabe equation. In a series of works \cite{JLX,LiYY0,LL03,LL05,LLL,CLL1},
Y. Y. Li and his collaborators have formulated a complete theory for $\mathbb{R}^{n+1}$.
For $\mathbb{R}_{+}^{n+1}$, Y. Y. Li and his collaborators  \cite{LL6, CLL3} demonstrated
that the metric with positive constant $f(\lambda(A_{g}))$ in $\mathbb{R}_{+}^{n+1}$
with constant mean curvature on $\partial\mathbb{R}_{+}^{n+1}$ is exactly
the round metric up to a conformal diffeomorphism, where
$f(\lambda(A_{g}))$ is a fully nonlinear conformally invariant equation
including the $\sigma_{k}(A_{g})$ equation.  
Cavalcante-Espinar \cite{Cava-Espinar} ultilized an alternative approach to prove the Liouville
theorem in $\mathbb{S}_{+}^{n+1}$. Abanto-Espinar \cite{AE} established that
there is no conformal metric of the round metric on
$\mathbb{S}_{+}^{n+1}$ satisfying  the conformal invariant equation $f(\lambda(A_{g}))=0$
with negative constant boundary mean curvature. For zero $\sigma_{k}$
curvature, under certain assumption about the boundary mean curvature,
Case-Wang \cite{CW18} established the Obata type theorem in $\overline{\Gamma_{k}^{+}}$
for constant boundary $\mathcal{B}_{k}^{g}$ curvature from a natural
variational viewpoint - see S. Chen \cite{ChenSophie} for the definition of $\mathcal{B}_{k}^{g}$. The second named author \cite{W} used
the moving plane method to obtain the Liouville theorem for positive
constant $\sigma_{k}$ curvature in $\mathbb{S}_{+}^{n+1}$ with constant $\mathcal{B}_{k}^{g}$
curvature on $\partial \mathbb{S}_{+}^{n+1}$. Bao-Li-Li in \cite{CLL2} obtained the Liouville theorem in the half space $\mathbb{R}_{+}^{n+1}$ with constant $\mathcal{B}_{k}^{g}$
curvature on $\partial\mathbb{R}_{+}^{n+1}$.

In this paper, we use an Obata-Escobar argument for $\sigma_{k}$ and  boundary mean curvatures to build a complete picture as that of the
scalar and boundary mean curvatures in the ball and state our result as follows. 
\begin{thm}
\label{thm:obata for k-curvature}
Let $\tilde g$ be a conformal metric in the unit Euclidean ball $(\B^{n+1},|\ud x|^2)$ for $n \geq 2$ and $\tilde g$ has constant boundary mean curvature $\tilde H$ on $\partial \B^{n+1}$. For $2\le k\le n$, suppose one of three conditions is true:
\begin{enumerate}[(i)]
\item  $\tilde g$ has zero $\sigma_k$ curvature in $\overline{\Gamma_k^+}$. 
\item $\tilde g$ has positive constant $\sigma_k$ curvature in $\overline{\Gamma_k^+}$ .
\item $\sigma_k(-\tilde A)$ is a positive constant for $-\tilde A \in \overline{\Gamma_k^+}$. 
\end{enumerate}
 Then $\tilde g$ is Einstein. Moreover, there exist a  canonical conformal metric $g_{\mathrm{c}}$ and  a conformal diffeomorphism $\Phi$ on $\overline{\B^{n+1}}$ such that
$$\tilde g=\Phi^\ast (g_{\mathrm{c}})$$
and the metric $g_{\mathrm{c}}$ is explicitly given as follows:
\begin{enumerate}[(i)]
\item If $\tilde \sigma_k=0,\tilde H \in \R_+$, then  $g_{\mathrm{c}}=\tilde H^{-2} |\ud x |^2$.
\item If $\tilde \sigma_k>0, \tilde H \in \R$, then  $g_{\mathrm{c}}=\big(\frac{\binom{n+1}{k}}{2^k \tilde \sigma_k}\big)^{1/k}\left(\frac{2\ve}{|x|^2+\ve^2}\right)^{2}|\ud x|^2$,
where
$$\left(\frac{2^k \tilde \sigma_k}{\binom{n+1}{k}}\right)^{\frac{1}{2k}}\frac{\ve^2-1}{2\ve}=\tilde H \quad \Rightarrow \quad \ve=\left(\frac{\binom{n+1}{k}}{2^k \tilde \sigma_k}\right)^{\frac{1}{2k}}\left(\tilde H+\sqrt{\tilde H^2+\left(\frac{2^k \tilde \sigma_k}{\binom{n+1}{k}}\right)^{\frac{1}{k}}}\right).$$
\item If $\sigma_k(-\tilde A)>0, \tilde H>\big(\frac{2 \sigma_k(-\tilde A)}{\binom{n+1}{k}}\big)^{1/(2k)}$, then  $g_{\mathrm{c}}=\big(\frac{\binom{n+1}{k}}{2^k \sigma_k(-\tilde A)}\big)^{1/k}\left(\frac{2\ve}{\ve^2-|x|^2}\right)^{2}|\ud x|^2$, where
\begin{align*}
&\left(\frac{2^k \sigma_k(-\tilde A)}{\binom{n+1}{k}}\right)^{\frac{1}{2k}}\frac{\ve^2+1}{2\ve}=\tilde H \\
 \Rightarrow&~~ \ve=\left(\frac{\binom{n+1}{k}}{2^k \sigma_k(-\tilde A)}\right)^{\frac{1}{2k}}\left(\tilde H+\sqrt{\tilde H^2-\left(\frac{2^k \sigma_k(-\tilde A)}{\binom{n+1}{k}}\right)^{\frac{1}{k}}}\right)>1.
 \end{align*}
\end{enumerate}
Here, $\tilde A =A_{\tilde g}, \tilde \sigma_k=\sigma_k(\tilde A)$ and $\tilde H=H_{\tilde g}$.
\end{thm}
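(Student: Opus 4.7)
The plan is to follow the Obata--Escobar scheme, upgraded to the fully nonlinear $\sigma_k$ setting by replacing Bochner-type identities with ones involving the Newton tensor $T_{k-1}(A_g)$. Writing $\tilde g=\Phi^\ast g$ with $g=u^{-2}|\ud x|^2$ after a Möbius normalization, I would show that the traceless Schouten tensor $\mathring A_g:=A_g-\frac{\sigma_1(A_g)}{n+1}g$ vanishes identically, i.e.\ $\tilde g$ is Einstein. The explicit forms of $g_c$ in (i)--(iii) will then follow by integrating the Einstein condition in the ball together with the prescribed values of $\sigma_k$ and $\tilde H$.

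The first step is a Möbius normalization. Since the $\sigma_k$ and boundary mean curvature equations are invariant under $\mathrm{Conf}(\overline{\B^{n+1}})$ and this group acts transitively modulo rotations, I can choose $\Phi$ to impose a barycentric condition such as $\int_{\pa\B^{n+1}} x\,\ud S_g=0$. After this normalization the Euclidean radial field $X=x^i\pa_{x^i}$, which is conformal for $|\ud x|^2$, becomes an admissible test field for the integral identity in the next step, with no residual linear boundary contribution.

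Next, I would derive an integral identity of Obata--Viaclovsky type. Using that $T_{k-1}(A_g)$ is divergence-free and pairing it with the Hessian of a scalar $\psi$ built from $u$ and $|x|^2$, integration by parts should yield
\begin{equation*}
\int_{\B^{n+1}} F\,|\mathring A_g|_g^2\,\ud V_g=\int_{\pa\B^{n+1}}\mathcal{B}(H_g,\mathrm{II}_g,u,\pa_\nu u)\,\ud S_g,
\end{equation*}
where $F\ge 0$ on $\overline{\Gamma_k^+}$ by the Newton--Maclaurin inequality (and on $-\overline{\Gamma_k^+}$ in case (iii)), and $\mathcal{B}$ depends only on the boundary data. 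Because $\pa\B^{n+1}$ is totally umbilic in the flat background, the conformal change formulas express $\mathrm{II}_g$ entirely in terms of $u|_{\pa\B^{n+1}}$, $\pa_\nu u$, and the constant $\tilde H$; substituting and using the prescribed sign of $\tilde H$ in each case should make $\mathcal{B}$ nonpositive. Both sides must then vanish, forcing $\mathring A_g\equiv 0$. A conformally flat Einstein metric on $\B^{n+1}$ is locally a space form, so a final elementary ODE integration in each cone delivers the flat, spherical, or hyperbolic model $g_c$ with $\ve$ determined by the stated quadratic in $\tilde H$.

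The hardest part is manufacturing the boundary density $\mathcal{B}$ and selecting $\psi$ so that the sign of the boundary integral is forced by the hypotheses on $\tilde H$ uniformly in all three cones. Case (iii) is the most sensitive: the cone reversal $-A_g\in\overline{\Gamma_k^+}$ flips several inequalities, and only the strict bound $\tilde H>\bigl(2\sigma_k(-\tilde A)/\binom{n+1}{k}\bigr)^{1/(2k)}$ rules out the degenerate endpoint $\ve=1$, at which the boundary term loses its definite sign; this is presumably the geometric reason for the hypothesis. A secondary issue is that in case (i) the condition $\sigma_k=0$ sits on $\pa\Gamma_k^+$, where ellipticity degenerates, so the Newton-tensor manipulation must be justified by perturbing to $\sigma_k=\delta>0$ and letting $\delta\to 0^+$, using uniform estimates extracted from the integral identity.
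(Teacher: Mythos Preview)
Your overall scheme---an Obata--Viaclovsky integration by parts using the divergence-free Newton tensor, leading to an inequality with equality iff the metric is Einstein---is exactly the right engine, and is what the paper does too. The genuine gap is in how you propose to close the boundary term.

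You plan to make the boundary density $\mathcal{B}$ nonpositive by ``using the prescribed sign of $\tilde H$ in each case.'' But in case~(ii) there is \emph{no} sign hypothesis on $\tilde H$: the theorem allows $\tilde H\in\mathbb{R}$. So a sign argument on $\tilde H$ cannot work there, and your strategy as written has no way to force the boundary integral to be $\le 0$. The paper's route is different and sharper: the boundary contribution is not merely sign-controlled, it is shown to be \emph{identically zero}. Concretely, working on the hemisphere $\mathbb{S}^{n+1}_+$ with $g=u^2 g_0$, the paper integrates $\langle L_k(A_g),E_g\rangle_g\,u$ (where $L_k=\frac{n+1-k}{n+1}\sigma_k\,g-T_k$) to get
\[
0\le \int_{\mathbb{S}^n} T_k(A_g)(\nabla u,\nu_g)\,\ud\sigma_g-\frac{n+1-k}{n+1}\,\sigma_k(A_g)\int_{\mathbb{S}^n} H_g\,u\,\ud\sigma_g,
\]
and then proves the right-hand side is exactly $0$ in two steps. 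First, umbilicity plus Codazzi plus $H_g=\mathrm{const}$ reduce $\int T_k(A_g)(\nabla u,\nu_g)$ to $H_g\int u\,\sigma_k(A_g^\top)$. Second---and this is the ingredient your outline is missing---Han's Kazdan--Warner identity, applied with the specific conformal Killing field $X=-\nabla_{g_0}x_{n+2}$ (which equals $u\,\nu_g$ on $\partial\mathbb{S}^{n+1}_+$), gives the exact relation
\[
\int_{\mathbb{S}^n} u\,\sigma_k(A_g^\top)\,\ud\sigma_g=\frac{n+1-k}{n+1}\,\sigma_k(A_g)\int_{\mathbb{S}^n} u\,\ud\sigma_g.
\]
Substituting, the two boundary terms cancel \emph{regardless of the sign of $H_g$}. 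The negative cone is handled by the same identity with $-A_g$. No M\"obius/barycentric normalization is needed; the field $X$ above plays that role.

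Two further corrections. Your reading of the lower bound $\tilde H>\bigl(2\sigma_k(-\tilde A)/\binom{n+1}{k}\bigr)^{1/(2k)}$ in case~(iii) is off: it is not used to fix a boundary sign in the Einstein step (that step needs no hypothesis on $\tilde H$ beyond constancy). It appears only \emph{after} one knows $\tilde g$ is Einstein, when solving $\nabla^2\phi=\frac{\Delta\phi}{n+1}\,\delta$ on $\overline{\B^{n+1}}$ to get $\phi(x)=a|x-y|^2+c$; positivity of $\phi$ with $ac<0$ forces exactly that inequality on $\tilde H$. And case~(i) needs no $\sigma_k=\delta$ perturbation: with $\sigma_k=0$ both boundary terms above vanish trivially, so the inequality already reads $0\le 0$ and the equality case gives Einstein directly.
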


\section{Proof of main theorem}

For an $(n+1) \times (n+1)$ matrix $W$, we define  the $k^{\mathrm{th}}$ Newton transformation by
$$T_k(W)_j^i=\frac{1}{k!}\sum \delta\left(\begin{matrix}
			i_1&\cdots&i_k  &i   \\
			j_1 &\cdots& j_k&j
		\end{matrix}
		\right)
		W_{i_1}^{j_1}\cdots W_{i_k}^{j_k} ,
	$$
	where the Kronecker symbol 
	$\delta\left(\begin{matrix}
			i_1&\cdots&i_k    &i   \\
			j_1 &\cdots& j_k  &j
		\end{matrix}
		\right)
$
	has value  $1~ (\mathrm{resp.} -1)$ for an even (resp. odd) permutation in the index set $\{1,2,\cdots,n+1\}$ when $i_1,\cdots,i_k,i$ are distinct; otherwise it has value $0$. The following recursive formula holds (see Reilly \cite{Reilly}):
	$$T_k(W)_j^i=\sigma_k(W)\delta_j^i-T_{k-1}(W)_l^iW_j^l.$$

On a smooth manifold $(M^{n+1},g)$, we define
\[
L_{k}(A_{g})_j^i:=\frac{n+1-k}{n+1}\sigma_{k}(A_{g})\delta_j^i-T_{k}(A_{g})_j^i
\]
and 
\[
E_{g}=\mathrm{Ric}_{g}-\frac{R_{g}}{n+1}g.
\]
Then we have $\sigma_{1}(L_{k}(A_g))=0$ and from \cite[Lemma 6.8]{LL03} that
\begin{align*}
\langle L_{k}\left(A_{g}\right),E_{g}\rangle_g=(n-1)\left[ \frac{n+1-k}{n+1}\sigma_{k}\left(A_{g}\right)\sigma_{1}\left(A_{g}\right)-(k+1)\sigma_{k+1}\left(A_{g}\right)\right] .
\end{align*}
The tracefree $(1,1)$-tensor $L_{k}(A_{g})$ has proved to be very successful
when dealing with $\sigma_{k}$ Yamabe equations, see for example
\cite{CGY1,CGY3,G1,G2,G3,LL03,V1}. Moreover, on locally conformal flat manifolds $(M,g)$, it was proved in  \cite{V1} that $\nabla_{i} T_{k}(A_{g})_{j}^i=0$.

\begin{lem}[\protect{\cite[Lemma 6.8]{LL03}}]\label{lem:Newton inequality} For $1\leq k\leq n$
and $A_{g}\in\Gamma_{k}^{+}$, there holds
\[
\langle L_{k}\left(A_{g}\right),E_{g}\rangle_g \geq 0
\]
and equality holds if and only if $E_{g}=0$.
\end{lem}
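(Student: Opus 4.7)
The plan is to read off the algebraic content of the lemma from the identity
$$\langle L_{k}(A_{g}),E_{g}\rangle_g=(n-1)\left[\tfrac{n+1-k}{n+1}\sigma_{k}(A_{g})\sigma_{1}(A_{g})-(k+1)\sigma_{k+1}(A_{g})\right]$$
already recorded in the excerpt, and then reduce matters to a purely algebraic Newton--Maclaurin type inequality on the eigenvalues $\lambda=\lambda(A_{g})\in\Gamma_{k}^{+}$, namely
$$\tfrac{n+1-k}{n+1}\sigma_{1}(\lambda)\sigma_{k}(\lambda)\geq(k+1)\sigma_{k+1}(\lambda),$$
with equality iff $\lambda_{1}=\cdots=\lambda_{n+1}$. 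The equality condition is exactly $A_{g}=\frac{\sigma_{1}(A_{g})}{n+1}g$, i.e.\ $E_{g}=0$, as one sees from the identity $E_{g}=(n-1)\bigl(A_{g}-\frac{\sigma_{1}(A_{g})}{n+1}g\bigr)$ obtained directly from the definitions of $A_{g}$ and $E_{g}$.

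To handle the inequality, I would first normalize by setting $N=n+1$ and $p_{j}=\sigma_{j}(\lambda)/\binom{N}{j}$ with $p_{0}=1$. Using $\binom{N}{k+1}=\binom{N}{k}(N-k)/(k+1)$, the desired inequality collapses to the clean form
$$p_{1}\,p_{k}\geq p_{k+1}.$$
I would then invoke Newton's classical inequality $p_{j}^{2}\geq p_{j-1}p_{j+1}$, which is valid for every $\lambda\in\R^{N}$ (no conicity hypothesis needed, as it follows from non-negativity of the discriminant of a certain quadratic). Since $\lambda\in\Gamma_{k}^{+}$ guarantees $p_{0},p_{1},\dots,p_{k}>0$, Newton's inequality rearranges to $p_{j+1}/p_{j}\leq p_{j}/p_{j-1}$ for $1\leq j\leq k$. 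Chaining these yields
$$\frac{p_{k+1}}{p_{k}}\leq\frac{p_{k}}{p_{k-1}}\leq\cdots\leq\frac{p_{2}}{p_{1}}\leq\frac{p_{1}}{p_{0}}=p_{1},$$
and multiplying by $p_{k}>0$ closes the inequality.

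The only step requiring a little extra care is the equality discussion. If equality holds in the lemma then each link in the telescoping chain must be an equality; in particular $p_{2}=p_{1}^{2}$, which is precisely Cauchy--Schwarz applied to $(\lambda_{1},\dots,\lambda_{N})$ and hence forces $\lambda_{1}=\cdots=\lambda_{N}$, i.e.\ $E_{g}=0$. The converse is immediate from $\lambda_{i}\equiv c$. No pathological equality case of Newton (involving zero eigenvalues) can intervene because $\lambda\in\Gamma_{k}^{+}$ keeps $p_{1},\dots,p_{k}$ strictly positive. I do not anticipate any serious obstacle: once the normalization has reduced matters to $p_{1}p_{k}\geq p_{k+1}$, the proof is essentially Newton's inequality, and even the finer Maclaurin inequality (which would require $\lambda\in\Gamma_{k+1}^{+}$) is not needed.
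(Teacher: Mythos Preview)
The paper does not supply a proof of this lemma; it is quoted from \cite[Lemma 6.8]{LL03} as a known result, so there is no in-paper argument to compare against. Your proposal is a correct and complete proof. The reduction via the displayed identity to the normalized inequality $p_{1}p_{k}\geq p_{k+1}$ is accurate (both sides become $(N-k)\binom{N}{k}$ times the corresponding $p$-quantity), and the telescoping chain built from Newton's inequalities $p_{j}^{2}\geq p_{j-1}p_{j+1}$ is valid because $\lambda\in\Gamma_{k}^{+}$ ensures $p_{1},\dots,p_{k}>0$, so every division step is legitimate. Your treatment of the equality case is also sound: equality at the endpoints forces equality at every link, in particular $p_{2}=p_{1}^{2}$, which is precisely the Cauchy--Schwarz equality $N\sum\lambda_{i}^{2}=(\sum\lambda_{i})^{2}$ and hence $\lambda_{1}=\cdots=\lambda_{N}$, i.e.\ $E_{g}=0$ via $E_{g}=(n-1)\bigl(A_{g}-\tfrac{\sigma_{1}(A_{g})}{n+1}g\bigr)$. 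This is essentially the standard argument one finds in the $\sigma_{k}$ literature, so nothing is missing.
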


We mimic the proof of \cite[Lemma 6.8]{LL03} to obtain the following result for the negative cone.
\begin{lem}
\label{lem:Newton for negative cone}
For $1\leq k\leq n$ and $-A_{g}\in\Gamma_{k}^{+}$, there holds
\[
\langle L_{k}\left(-A_{g}\right),-E_{g}\rangle_g\geq 0
\]
and equality holds if and only if $E_{g}=0.$
\end{lem}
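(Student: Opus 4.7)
The plan is to reduce the inequality for the negative cone to Lemma \ref{lem:Newton inequality} by the substitution $B := -A_{g} \in \Gamma_{k}^{+}$. Once the pairing $\langle L_{k}(-A_{g}), -E_{g}\rangle_{g}$ is identified with the Newton-type expression in $B$ that appeared in the paragraph just before Lemma \ref{lem:Newton inequality}, the result follows immediately by applying that lemma to $B$.

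I would proceed in three steps. First, since $T_{k}$ and $\sigma_{k}$ are homogeneous of degree $k$, one has $L_{k}(-A_{g}) = (-1)^{k} L_{k}(A_{g})$, and in particular $L_{k}(-A_{g})$ is again tracefree ($\sigma_{1}(L_{k}(\cdot))=0$). Second, the identity $E_{g} = (n-1)\big(A_{g} - \frac{\sigma_{1}(A_{g})}{n+1}g\big)$, combined with $\sigma_{1}(-A_{g}) = -\sigma_{1}(A_{g})$, gives $-E_{g} = (n-1)\big(B - \frac{\sigma_{1}(B)}{n+1}g\big)$. Third, since $L_{k}(B)$ is tracefree, its pairing against the pure-trace correction vanishes, and the standard algebraic identities $\mathrm{tr}(T_{k}(B)) = (n+1-k)\sigma_{k}(B)$ and $\mathrm{tr}(T_{k}(B)B) = (k+1)\sigma_{k+1}(B)$ reduce the pairing to
$$\langle L_{k}(-A_{g}), -E_{g}\rangle_{g} = (n-1)\left[\frac{n+1-k}{n+1}\sigma_{k}(B)\sigma_{1}(B) - (k+1)\sigma_{k+1}(B)\right].$$
Since $B \in \Gamma_{k}^{+}$, Lemma \ref{lem:Newton inequality} applies directly and yields non-negativity of the bracketed expression, with equality iff all eigenvalues of $B$ coincide, iff $A_{g}$ is a scalar multiple of $g$, iff $E_{g} = 0$.

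The only subtle point --- and the reason the authors write that the proof \emph{mimics}, rather than invokes, \cite[Lemma 6.8]{LL03} --- is that $E_{g}$ does not itself transform cleanly under $A_{g} \mapsto -A_{g}$: it is built from $\mathrm{Ric}_{g}$ and $R_{g}$, which are fixed once the metric is. The resolution is that only the tracefree part of $E_{g}$ pairs non-trivially with the tracefree tensor $L_{k}(\cdot)$, and this tracefree part does flip sign under $A_{g}\mapsto-A_{g}$. Once this observation is in place, the calculation is a verbatim parity analog of the $\Gamma_{k}^{+}$ case, so I do not anticipate any serious obstacle.
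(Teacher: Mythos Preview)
Your proposal is correct and is essentially the approach the paper indicates: the paper simply states that one should ``mimic the proof of \cite[Lemma 6.8]{LL03},'' and your substitution $B=-A_g\in\Gamma_k^+$ together with the identity $-E_g=(n-1)\big(B-\tfrac{\sigma_1(B)}{n+1}g\big)$ is exactly what makes that mimicry a verbatim repetition of the positive-cone computation. Your remark that $L_k(\cdot)$ being tracefree is what allows the pairing to see only the tracefree part of $E_g$ (which does flip sign) is precisely the point, and the equality characterization via equal eigenvalues of $B$ iff $E_g=0$ is correct.
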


In the following, let $g=u^{2}g_{0}$ in $\mathbb{S}_{+}^{n+1}:=\{x\in \Sp^{n+1}; x_{n+2}>0\}$ with $g_{0}=g_{\Sp^{n+1}}$ being the round metric, and $\bar g$ be the induced metric of $g$ on $\partial\mathbb{S}_{+}^{n+1}=\Sn$. Let $\nabla$ and $\overline \nabla$  denote the covariant derivatives with respect to the Levi-Civita connections of $g$ and $\bar g$. Then we obtain
\begin{equation}\label{eq:New  expression}
E_{g}=-\frac{n-1}{u}\big(\nabla^{2}u-\frac{\Delta_{g}u}{n+1}g\big)
\end{equation}
and
\begin{equation*}
A_{g}=-\frac{1}{u}\nabla_{g}^{2}u+\frac{\left|\nabla u\right|_{g}^{2}}{2u^{2}}g+\frac{1}{2u^{2}}g \qquad\mathrm{~~in~~} \quad \mathbb{S}_{+}^{n+1}.
\end{equation*}
Under Fermi coordinates around $\mathsf{p}\in \partial\mathbb{S}_{+}^{n+1}$  the metric $g$ can be expressed
		as 
		\[
		g=\ud y_{n+1}^{2}+\bar{g}_{\alpha\beta}\ud y_{\alpha}\ud y_{\beta}\qquad\qquad\mathrm{~~in~~}B_{\rho}^{+}:=\{y \in \R^{n+1}; |y|<\rho,~y_{n+1}>0\},
		\]
		where $(y_{1},\cdots,y_{n})$ are the geodesic normal coordinates
		on $\pa \mathbb{S}_{+}^{n+1}$ and $\nu_g=-\pa_{y_{n+1}}$ is the outward unit normal vector on $\pa \mathbb{S}_{+}^{n+1}$. We define the second fundamental form on $\pa \mathbb{S}_{+}^{n+1}$ by 
	\[
	L_{\alpha \beta}=\langle\nabla_{\pa _\alpha}\nu_g,\pa _\beta\rangle=-\langle\nabla_{\pa _\alpha} \pa _\beta,\nu_g\rangle, \quad 1 \leq \alpha,\beta \leq n.
	\]
Here, $\pa_\alpha=\pa _{y_\alpha}$,  $\nu_{g}$ is the outward unit
normal on $\partial\mathbb{S}_{+}^{n+1}$, and $H_{g}=(\sum_{\alpha=1}^n L_\alpha^{\alpha})/n$ is the mean curvature with respect to $g$. Moreover, we have
\begin{equation}\label{eqn:mean_curvature}
u_{\nu_{g}}=H_{g}u.
\end{equation}

\begin{lem}\label{lem:inequality for positive cone} 
Fix {$1\leq k\le n$}. For constant $\sigma_{k}(A_g)$
and $A_{g}\in\overline{\Gamma_{k}^{+}}$ in $(\mathbb{S}_{+}^{n+1},g)$,
we have 
\begin{align*}
0\le & \int_{\mathbb{S}^{n}}T_{k}(A)(\nabla u,\nu_{g})\ud\sigma_{g}-\frac{n+1-k}{n+1}\sigma_{k}(A_{g})\int_{\mathbb{S}^{n}} H_{g} u\ud \sigma_{g},
\end{align*}
and equality holds if and only if $g$
is Einstein.
\end{lem}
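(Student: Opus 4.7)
The plan is to integrate the pointwise Newton-type bound of Lemma~\ref{lem:Newton inequality} against $u/(n-1)$ over $\mathbb{S}_+^{n+1}$, and then convert the resulting bulk integral into a boundary integral by integration by parts, exploiting the local conformal flatness of the round sphere.

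For the pointwise step, since $A_g \in \overline{\Gamma_k^+}$, Lemma~\ref{lem:Newton inequality} (extended to the closed cone by continuity) gives $\langle L_k(A_g), E_g\rangle_g \ge 0$, with equality exactly when $E_g = 0$. I multiply by $u/(n-1) > 0$ and use the displayed formula $E_g = -\frac{n-1}{u}\big(\nabla^2 u - \frac{\Delta_g u}{n+1} g\big)$. Because $L_k(A_g)$ is trace-free (the excerpt records $\sigma_1(L_k(A_g))=0$), the pairing with the $\frac{\Delta_g u}{n+1}g$ piece drops out, so
$$\langle L_k(A_g), E_g\rangle_g \cdot \frac{u}{n-1} = -\langle L_k(A_g), \nabla^2 u\rangle_g.$$

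Next I integrate this identity over $\mathbb{S}_+^{n+1}$ and integrate by parts. Since $(\mathbb{S}^{n+1}, g_{\Sp^{n+1}})$ is conformally flat, so is $(\mathbb{S}_+^{n+1}, g)$, and by the Viaclovsky identity recalled in the excerpt, $\nabla_i T_k(A_g)_j^i = 0$; combined with the constancy of $\sigma_k(A_g)$, this yields $\nabla_i L_k(A_g)_j^i = 0$. Hence the bulk term after integration by parts vanishes and only the boundary term survives:
$$-\int_{\mathbb{S}_+^{n+1}} \langle L_k(A_g), \nabla^2 u\rangle_g\, dV_g = -\int_{\mathbb{S}^n} L_k(A_g)(\nabla u, \nu_g)\, d\sigma_g.$$
Splitting $L_k(A_g) = \frac{n+1-k}{n+1}\sigma_k(A_g)\,I - T_k(A_g)$ and substituting the Robin-type identity $u_{\nu_g} = H_g u$ on $\partial \mathbb{S}_+^{n+1}$, the right-hand side becomes exactly $\int_{\mathbb{S}^n} T_k(A)(\nabla u,\nu_g)\,d\sigma_g - \frac{n+1-k}{n+1}\sigma_k(A_g)\int_{\mathbb{S}^n} H_g u\, d\sigma_g$, and nonnegativity of the left-hand side gives the claim.

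For the equality case, if equality holds then the nonnegative integrand $\langle L_k(A_g),E_g\rangle_g u$ must vanish pointwise; since $u>0$, Lemma~\ref{lem:Newton inequality} forces $E_g \equiv 0$, i.e., $g$ is Einstein. The two mildly delicate points are the tracefree cancellation that kills the $\Delta_g u$ term and the divergence-free property $\nabla\cdot L_k(A_g)=0$; both are algebraic and follow from facts already recorded in the excerpt, so I expect no serious obstacle beyond careful bookkeeping of signs and the short continuity extension of Lemma~\ref{lem:Newton inequality} to the closed cone $\overline{\Gamma_k^+}$.
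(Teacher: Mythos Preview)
Your proposal is correct and follows essentially the same argument as the paper: multiply the pointwise inequality $\langle L_k(A_g),E_g\rangle_g\ge 0$ by $u$, use the trace-free property of $L_k$ together with \eqref{eq:New  expression} to replace $E_g$ by a Hessian term, integrate by parts using the divergence-free property coming from local conformal flatness and the constancy of $\sigma_k(A_g)$, and finish with $u_{\nu_g}=H_g u$. The only cosmetic difference is that the paper splits $L_k(A_g)=\frac{n+1-k}{n+1}\sigma_k(A_g)g-T_k(A_g)$ \emph{before} integrating by parts (invoking $\nabla_i T_k(A_g)^i_j=0$ and the constancy of $\sigma_k$ separately on the two pieces), whereas you keep $L_k$ intact and split only at the boundary; the two computations are line-by-line equivalent.
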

\begin{proof}
By \eqref{eq:New  expression} we have
\begin{align*}
0 \le& \int_{\mathbb{S}_{+}^{n+1}}\langle L_{k}\left(A_{g}\right),E_{g}\rangle_g u\ud v_{g}\\
 =&\int_{\mathbb{S}_{+}^{n+1}}\langle L_{k}\left(A_{g}\right),-\frac{n-1}{u}\nabla^{2}u\rangle_{g}u\ud v_{g}\\
=&\int_{\mathbb{S}_{+}^{n+1}}\langle \frac{n+1-k}{n+1}\sigma_{k}(A_{g})g-T_{k}(A_{g}),-(n-1)\nabla^{2}u\rangle_{g}\ud v_{g}\\
 =&(n-1)\int_{\mathbb{S}_{+}^{n+1}}\langle T_{k}(A_{g}),\nabla^{2}u\rangle \ud v_{g}-(n-1)\frac{n+1-k}{n+1}\sigma_{k}(A_{g})\int_{\Sp_{+}^{n+1}}\Delta_g u \ud v_{g}\\
 =&-(n-1)\int_{\mathbb{S}_{+}^{n+1}}\nabla_{i}T_{k}(A_{g})_{j}^i \nabla^j u \ud v_{g}+(n-1)\int_{\mathbb{S}^{n}}T_{k}(A_g)(\nabla u,\nu_{g})\ud\sigma_{g}\\
 &-(n-1)\frac{n+1-k}{n+1}\sigma_{k}(A_{g})\int_{\mathbb{S}^{n}}u_{\nu_{g}}\ud v_{g}\\
 =&(n-1)\int_{\mathbb{S}^{n}}T_{k}(A_{g})(\nabla u,\nu_{g})\ud\sigma_{g}-(n-1)\frac{n+1-k}{n+1}\sigma_{k}(A_{g})\int_{\mathbb{S}^{n}}H_{g} u\ud v_{g},
\end{align*}
where we used $\nabla_{i}T_{k}(A_{g})_j^i=0$ in $\mathbb{S}_{+}^{n+1}.$ 
\end{proof}

\begin{lem}
\label{lem:inequality for negative cone}Fix {$1\leq k\le n$}. For constant $\sigma_{k}(-A_{g})$
and $-A_{g}\in\overline{\Gamma_{k}^{+}}$ in $(\mathbb{S}_{+}^{n+1},g)$,
we have 
\begin{align*}
0\le & \int_{\mathbb{S}^{n}}-T_{k}(-A_{g})(\nabla u,\nu_{g})\ud\sigma_{g}+\frac{n+1-k}{n+1}\sigma_{k}(-A_{g})\int_{\mathbb{S}^{n}}H_{g} u\ud v_{g},
\end{align*}
and the equality holds if and only if $g$ is Einstein.
\end{lem}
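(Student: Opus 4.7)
The plan is to mirror the proof of Lemma~\ref{lem:inequality for positive cone} step by step, swapping in Lemma~\ref{lem:Newton for negative cone} for Lemma~\ref{lem:Newton inequality} and keeping careful track of the signs introduced by the sign change in the Schouten tensor. All of the ingredients used in the positive-cone argument — the tracefree identity $\sigma_1(L_k(\cdot))=0$, the divergence-free property $\nabla_i T_k(\cdot)_j^i=0$, the expression \eqref{eq:New expression} for $E_g$, and the boundary identity \eqref{eqn:mean_curvature} — are purely algebraic/differential and carry over unchanged to the negative cone.

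I would begin by recording the two algebraic observations that make this painless. First, $L_k(-A_g)$ is again tracefree: this is immediate from its definition and the fact that $\sigma_1(T_k(-A_g))=(n+1-k)\sigma_k(-A_g)$. Second, $T_k(-W)=(-1)^k T_k(W)$ follows directly from the generalized Kronecker-delta formula defining $T_k$, so Viaclovsky's identity $\nabla_i T_k(A_g)_j^i=0$ on the locally conformally flat space $(\mathbb{S}_+^{n+1},g)$ yields $\nabla_i T_k(-A_g)_j^i=0$ as well.

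With these in hand, I would start from Lemma~\ref{lem:Newton for negative cone}, multiply by $u>0$, and integrate over $\mathbb{S}_+^{n+1}$. Using \eqref{eq:New expression} we have $-E_g=\frac{n-1}{u}(\nabla^2 u-\frac{\Delta_g u}{n+1}g)$; the tracefree property of $L_k(-A_g)$ kills the term $\frac{\Delta_g u}{n+1} g$, leaving
\[
0\le (n-1)\int_{\mathbb{S}_+^{n+1}}\big\langle \tfrac{n+1-k}{n+1}\sigma_k(-A_g)g-T_k(-A_g),\nabla^2 u\big\rangle_g \, \ud v_g.
\]
The first piece produces $(n-1)\frac{n+1-k}{n+1}\sigma_k(-A_g)\int \Delta_g u\,\ud v_g$, which by the divergence theorem and \eqref{eqn:mean_curvature} equals $(n-1)\frac{n+1-k}{n+1}\sigma_k(-A_g)\int_{\Sn} H_g u\,\ud \sigma_g$. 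For the second piece, one integration by parts together with $\nabla_i T_k(-A_g)_j^i=0$ leaves only the boundary term $-(n-1)\int_{\Sn} T_k(-A_g)(\nabla u,\nu_g)\,\ud\sigma_g$. Dividing by $n-1>0$ produces exactly the claimed inequality, and the equality case follows immediately from the equality case of Lemma~\ref{lem:Newton for negative cone}, i.e.\ $E_g=0$, which is the Einstein condition.

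I do not expect any genuine obstacle here: the real content sits in Lemmas~\ref{lem:Newton for negative cone} and in the two algebraic identities above, and once those are recognized the computation is a transcription of the positive-cone argument with the signs book-kept correctly. The only place demanding attention is ensuring that the divergence identity $\nabla_i T_k(-A_g)_j^i=0$ really does pass through the sign flip, which it does because $T_k$ is homogeneous of degree $k$ in $A_g$ and the sphere is locally conformally flat.
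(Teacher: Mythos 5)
Your proof is correct and follows the paper's argument essentially line by line: integrate the pointwise inequality from Lemma~\ref{lem:Newton for negative cone} against $u\,\ud v_g$, use that $L_k(-A_g)$ is tracefree to pass from $-E_g$ to $\frac{n-1}{u}\nabla^2 u$, integrate by parts using $\nabla_i T_k(-A_g)_j^i=0$ on the locally conformally flat hemisphere, and apply the divergence theorem with \eqref{eqn:mean_curvature}. The extra observations you record — that $L_k(-A_g)$ is tracefree and that $T_k(-W)=(-1)^kT_k(W)$ transports Viaclovsky's divergence-free identity — are exactly the facts the paper uses implicitly, so this is the same proof with slightly more justification.
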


\begin{proof}
By \eqref{eq:New  expression} and Lemma \ref{lem:Newton for negative cone}, we have
\begin{align*}
0 & \le\int_{\mathbb{S}_{+}^{n+1}}\langle L_{k}\left(-A_{g}\right),-E_{g}\rangle_g u\ud v_{g}\\
 & =\int_{\mathbb{S}_{+}^{n+1}}\langle L_{k}\left(-A_{g}\right),\frac{n-1}{u}\nabla^{2}u \rangle_{g}u\ud v_{g}\\
 & =\int_{\mathbb{S}_{+}^{n+1}}\langle \frac{n+1-k}{n+1}\sigma_{k}(-A_g)g-T_{k}(-A_g),(n-1)\nabla^{2}u \rangle_{g}\ud v_{g}\\
 & =-(n-1)\int_{\mathbb{S}_{+}^{n+1}}\langle T_{k}(-A_{g}),\nabla^{2}u \rangle \ud v_{g}+(n-1)\frac{n+1-k}{n+1}\sigma_{k}(-A_{g})\int_{\mathbb{S}_{+}^{n+1}}\Delta_{g}u\ud v_{g}\\
 & =-(n-1)\int_{\mathbb{S}^{n}}T_{k}(-A_{g})(\nabla u,\nu_{g})\ud\sigma_{g}+(n-1)\frac{n+1-k}{n+1}\sigma_{k}(-A_{g})\int_{\mathbb{S}^{n}}u_{\nu_{g}}\ud v_{g}\\
 & =-(n-1)\int_{\mathbb{S}^{n}}T_{k}(-A_{g})(\nabla u,\nu_{g})\ud\sigma_{g}+(n-1)\frac{n+1-k}{n+1}\sigma_{k}(-A_{g})\int_{\mathbb{S}^{n}}H_{g}u\ud v_{g},
\end{align*}
where we used $\nabla_{i}T_{k}(-A_g)_{j}^i=0$ in $\mathbb{S}_{+}^{n+1}.$ 
\end{proof}
In the following, let $A_{g}^{\top}$ denote the tangential part of
$A_{g}$ on the boundary $\mathbb{S}^{n}$.

\begin{lem}
\label{lem:Kadan-Warn -1}
Fix {$1\leq k\le n$}. Let $g=u^{2}g_{0}$
be a metric in $\mathbb{S}_{+}^{n+1}$ such that $\sigma_{k}(A_{g})$ is constant. Then

\[
\int_{\mathbb{S}^{n}}u\sigma_{k}(A_{g}^{\top}) \ud\sigma_{g}=\frac{n+1-k}{n+1}\sigma_{k}(A_{g})\int_{\mathbb{S}^{n}}u\ud\sigma_{g}.
\]
\end{lem}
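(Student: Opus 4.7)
The plan is a Kazdan--Warner type argument that pairs the symmetric, divergence-free Newton tensor $T_k(A_g)$ (divergence-free on the locally conformally flat $\mathbb{S}^{n+1}$ by Viaclovsky \cite{V1}) against a conformal Killing vector field. I take the gradient CKF $X = \nabla^{g_0}\phi$ on $\mathbb{S}^{n+1}$ with $\phi(x) = x_{n+2}$, for which the standard Hessian identity $\nabla^{g_0,2}\phi = -\phi g_0$ yields $\mathcal{L}_X g_0 = -2\phi g_0$. Since being conformal Killing is a conformally invariant property, $\mathcal{L}_X g = 2\psi g$ for $g = u^2 g_0$, where $\psi = X(u)/u - \phi$.

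The key algebraic observation at the boundary is the identity $T_k(A_g)(\nu_g,\nu_g) = \sigma_k(A_g^\top)$. In Fermi coordinates the $(n+1,n+1)$-component of $T_k(A_g)$, by the definition of the generalized Kronecker symbol with its final slot pinned at $n+1$, collapses to the elementary symmetric polynomial of the tangential $n \times n$ block of $A_g$. Moreover, at the equator $\phi = 0$ and $\nabla^{g_0}\phi = e_{n+2} = -\nu_{g_0} = -u\nu_g$, so $X$ is purely normal on $\partial \mathbb{S}_+^{n+1}$ with $\langle X,\nu_g\rangle_g = -u$. Combining these gives $T_k(A_g)(X,\nu_g)|_{\partial} = -u\,\sigma_k(A_g^\top)$.

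Now pair $T_k(A_g)$ with $\nabla X$ and integrate over $\mathbb{S}_+^{n+1}$. Symmetry of $T_k$, the trace identity $\mathrm{tr}(T_k(A_g)) = (n+1-k)\sigma_k(A_g)$, the relation $\mathcal{L}_X g = 2\psi g$, and the divergence theorem using $\nabla_i T_k(A_g)^i_{\ j} = 0$ together yield
\[
\begin{aligned}
(n+1-k)\sigma_k(A_g)\int_{\mathbb{S}_+^{n+1}}\psi\,\ud v_g &= \int_{\mathbb{S}_+^{n+1}} T_k(A_g)^{ij}\nabla_i X_j\,\ud v_g \\
&= \int_{\mathbb{S}^n} T_k(A_g)(X,\nu_g)\,\ud\sigma_g = -\int_{\mathbb{S}^n} u\,\sigma_k(A_g^\top)\,\ud\sigma_g.
\end{aligned}
\]
It then remains to compute $\int \psi\,\ud v_g$. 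Using $\ud v_g = u^{n+1}\ud v_{g_0}$, the identity $u^n X(u) = \frac{1}{n+1}X(u^{n+1})$, $\mathrm{div}_{g_0}(X) = -(n+1)\phi$, $\langle X,\nu_{g_0}\rangle|_{\partial} = -1$, and $\ud\sigma_g = u^n\ud\sigma_{g_0}$, the divergence theorem on $(\mathbb{S}_+^{n+1},g_0)$ makes the $\phi$-contributions cancel and produces $\int \psi\,\ud v_g = -\frac{1}{n+1}\int_{\mathbb{S}^n} u\,\ud\sigma_g$. Substituting yields the claimed identity.

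The main effort lies in two bookkeeping steps: verifying the Kronecker-symbol reduction $T_k(A_g)(\nu_g,\nu_g) = \sigma_k(A_g^\top)$, and carefully tracking the cancellation of the $\phi$-terms that makes $\int\psi\,\ud v_g$ reduce to a pure boundary integral of $u$. Neither is conceptually hard, but both must be done with care.
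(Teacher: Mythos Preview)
Your argument is correct and is essentially the same as the paper's: both pair the divergence-free Newton tensor $T_k(A_g)$ against the conformal Killing field $X=\pm\nabla_{g_0}x_{n+2}$ on $\mathbb{S}_+^{n+1}$, use $T_k(A_g)(\nu_g,\nu_g)=\sigma_k(A_g^\top)$ at the boundary, and exploit constancy of $\sigma_k(A_g)$. The only cosmetic difference is that the paper quotes Han's pointwise Kazdan--Warner identity $X(\sigma_k)=-\frac{n+1}{n+1-k}\nabla_j[T_k{}^j_iX^i]+\nabla_j[\sigma_kX^j]$ and integrates it (so the $\sigma_k$-term appears directly as $\int_{\mathbb{S}^n}\sigma_k\langle X,\nu_g\rangle_g\,\ud\sigma_g$), whereas you integrate $T_k^{ij}\nabla_iX_j$ by parts and then separately evaluate $\int\psi\,\ud v_g$; these are two equivalent packagings of the same divergence computation.
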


\begin{proof}

 The following important identity was first proved by Zhen-Chao Han \cite[p.6 in \href{https://sites.math.rutgers.edu/~zchan/current-preprint/KW.pdf}{the preprint version}]{Han}: For  a conformal Killing vector field $X$ in $\mathbb{S}_{+}^{n+1}$, there holds
\[
X\big(\sigma_{k}(A_g)\big) =-\frac{n+1}{n+1-k}\nabla_{j}\left[T_{k}(A_{g})_{i}^{j}X^{i}\right]+\nabla_{j}\left[\sigma_{k}(A_{g})X^{j}\right].
\]
Alternatively, it is a direct consequence of  \cite[(4)]{Han} and the elementary property of $T_k(A_g)$ (see \cite{V1}) that 
$$T_k(A_g)_i^j=\sigma_k(A_g) \delta_i^j-T_{k-1}(A_g)_l^j A_i^l, \qquad T_{k-1}(A_g)_l^i A_i^l=k \sigma_k(A_g).$$

Integrating both sides of the above identity over $\Sp^{n+1}_+$ yields 
\begin{align*}
 & (n+1-k)\int_{\mathbb{S}_{+}^{n+1}}X\big(\sigma_{k}(A_g)\big) \ud v_{g}\\
= & -(n+1)\int_{\mathbb{S}_{+}^{n+1}}\nabla_{j}\left[T_{k}(A_g)_{i}^{j}X^{i}\right]\ud v_{g}+(n+1-k)\int_{\mathbb{S}_{+}^{n+1}}\nabla_{j}\left[\sigma_{k}(A_g)X^{j}\right]\ud v_{g}\\
= & -(n+1)\int_{\mathbb{S}^{n}}T_{k}(A_{g})(X,\nu_{g}) \ud\sigma_{g}+(n+1-k)\int_{\mathbb{S}^{n}}\sigma_{k}(A_{g})\langle X,\nu_{g}\rangle_{g} \ud\sigma_{g}.
\end{align*}

We now take $X=-\nabla_{g_0}x_{n+2}$ to be the conformal Killing vector field on $\mathbb{S}_{+}^{n+1}$ such that $X=u\nu_{g}$ on $\mathbb{S}^{n}$, and obtain that
\begin{align*}
0 & =-(n+1)\int_{\mathbb{S}^{n}}T_{k}(A_{g})(X,\nu_{g})\ud\sigma_{g}+(n+1-k)\int_{\mathbb{S}^{n}}\sigma_{k}(A_{g})\langle X,\nu_{g}\rangle_{g} \ud\sigma_{g}\\
 & =-(n+1)\int_{\mathbb{S}^{n}}u\sigma_{k}(A_{g}^{\top})\ud\sigma_{g}+(n+1-k)\int_{\mathbb{S}^{n}}u\sigma_{k}(A_{g})\ud\sigma_{g}.
\end{align*}
The proof is complete. 
\end{proof}
Now we prove a key identity involving the boundary term. 
\begin{lem}
\label{lem:boundary final term}
Fix  {$1 \leq k\le n$.} Let $g=u^{2}g_{0}$
be a metric in $\mathbb{S}_{+}^{n+1}$  such that $\sigma_{k}(A_{g})$  and $H_{g}$
are constant. Then 
\[
\int_{\mathbb{S}^{n}}T_{k}(A_{g})(\nabla u,\nu_{g})\ud\sigma_{g}=\frac{n+1-k}{n+1}\sigma_{k}(A_{g})H_{g}\int_{\mathbb{S}^{n}}u\ud \sigma_{g}.
\]
\end{lem}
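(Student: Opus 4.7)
The plan is to decompose $\nabla u$ into tangential and normal parts on the boundary, and then to show that the tangential contribution vanishes \emph{pointwise} on $\Sn$ via a key identity: the Schouten tensor $A_g$ has no mixed tangent--normal components on $\Sn$.

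First I would decompose $\nabla u = \overline\nabla u + u_{\nu_g}\nu_g = \overline\nabla u + H_g u\,\nu_g$ on $\Sn$, using \eqref{eqn:mean_curvature} and the constancy of $H_g$. In an orthonormal frame adapted to the boundary, the $(n+1,n+1)$-entry of the Newton tensor of an $(n+1)\times(n+1)$ symmetric matrix coincides with $\sigma_k$ of its $n\times n$ tangential block, so $T_k(A_g)(\nu_g,\nu_g)=\sigma_k(A_g^{\top})$. Hence
\[
T_k(A_g)(\nabla u,\nu_g)=T_k(A_g)(\overline\nabla u,\nu_g)+H_g u\,\sigma_k(A_g^{\top}),
\]
and integrating over $\Sn$ while applying Lemma \ref{lem:Kadan-Warn -1} to the second summand produces exactly $\tfrac{n+1-k}{n+1}\sigma_k(A_g)H_g\int_{\Sn}u\,\ud\sigma_g$. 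The whole identity therefore reduces to the claim $T_k(A_g)(\overline\nabla u,\nu_g)\equiv0$ on $\Sn$.

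The key observation is that $A_g(\pa_\alpha,\nu_g)=0$ pointwise on $\Sn$. Since the equator $\Sn$ is totally geodesic in $(\mathbb{S}^{n+1},g_0)$, the conformal transformation law for the second fundamental form combined with $u_{\nu_g}=H_g u$ gives $L_g=H_g\,\bar g$ (total umbilicity). In Fermi coordinates $g_{\alpha,n+1}=0$ and $\Gamma^\beta_{\alpha,n+1}=-L^\beta_\alpha=-H_g\,\delta^\beta_\alpha$, so
\[
(\nabla^2 u)_{\alpha,n+1}=\pa_\alpha\pa_{n+1}u+H_g\,\pa_\alpha u.
\]
Since $\pa_{n+1}u=-u_{\nu_g}=-H_g u$ on $\Sn$ and $H_g$ is constant, tangential differentiation gives $\pa_\alpha\pa_{n+1}u=-H_g\,\pa_\alpha u$ on the boundary, so $(\nabla^2 u)_{\alpha,n+1}=0$ on $\Sn$. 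Substituting into the formula $A_g=-\tfrac{1}{u}\nabla^2 u+\tfrac{|\nabla u|_g^2}{2u^2}g+\tfrac{1}{2u^2}g$ (whose last two summands vanish in the mixed slot because $g_{\alpha,n+1}=0$) yields $A_g(\pa_\alpha,\nu_g)=0$ on $\Sn$.

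Consequently $A_g$ is block-diagonal in the tangent--normal splitting on $\Sn$. Via the recursion $T_k(A_g)=\sigma_k(A_g)I-T_{k-1}(A_g)A_g$ stated at the start of Section 2, $T_k(A_g)$ is a polynomial in $A_g$ and so is also block-diagonal on $\Sn$; in particular $T_k(A_g)(X,\nu_g)=0$ for every tangent vector $X$. Thus $T_k(A_g)(\overline\nabla u,\nu_g)\equiv0$ on $\Sn$ and the remaining integral vanishes, completing the proof. There is no real obstacle: the proof rests on spotting the two ingredients, total umbilicity and the Hessian cancellation, both of which are direct consequences of the boundary condition $u_{\nu_g}=H_g u$ with $H_g$ constant.
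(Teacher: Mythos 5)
Your proof is correct and follows the same overall structure as the paper's: split $\nabla u$ into tangential and normal parts, identify $T_k(A_g)(\nu_g,\nu_g)=\sigma_k(A_g^\top)$, use the Kazdan--Warner identity (Lemma \ref{lem:Kadan-Warn -1}) on the normal contribution, and show the tangential contribution vanishes. The one place where you diverge from the paper is in how you prove that tangential vanishing: the paper observes via the Codazzi equation and umbilicity that $A_{\beta(n+1)}=\overline{\nabla}_\beta H_g$ on $\Sn$ (a geometric identity that is valid even for nonconstant $H_g$) and then invokes $H_g=\mathrm{const.}$ together with \cite[Lemma 3(d)]{ChenSophie} to kill $T_k(A_g)(\overline\nabla u,\nu_g)=T_{k-1}(A_g^\top)(\overline\nabla H_g,\overline\nabla u)$; you instead compute $(\nabla^2_g u)_{\alpha,n+1}$ directly in Fermi coordinates, cancel the Christoffel contribution $\Gamma^\beta_{\alpha,n+1}=-H_g\delta^\beta_\alpha$ against the tangential derivative of the Neumann condition $\pa_{n+1}u=-H_gu$, and conclude $A_{\alpha(n+1)}=0$ so that $T_k(A_g)$ is block-diagonal. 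Both arguments hinge on the same two facts (umbilicity and constancy of $H_g$), so the difference is cosmetic rather than conceptual: yours is a slightly more elementary coordinate computation, while the paper's Codazzi route exposes the cleaner intrinsic identity $A_{\beta(n+1)}=\overline{\nabla}_\beta H_g$. Your block-diagonality observation for $T_k$ via the recursion $T_k(A)=\sigma_k(A)I-T_{k-1}(A)A$ is a valid replacement for the explicit component formula $T_k(A_g)^\alpha_{n+1}=-T_{k-1}(A_g^\top)^\alpha_\beta A^\beta_{n+1}$ the paper quotes from \cite{ChenSophie}.
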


\begin{proof}
Notice that $g$ is umbilical on $\pa \mathbb{S}_{+}^{n+1}$, that is, $L_{\alpha \beta}=H_g \bar g_{\alpha \beta}$. Under Fermi coordinates around $\mathsf{p} \in \pa \mathbb{S}_{+}^{n+1}$, by Codazzi equation we have
\begin{align*}
A_{\beta (n+1)}=\frac{1}{n-1}R_{\beta (n+1)}=&\frac{1}{n-1}R_{~\beta \alpha (n+1)}^\alpha\\
=&\frac{1}{n-1}({\overline{\nabla}}_\beta L^{~\alpha}_{\alpha}-{\overline{\nabla}}^\alpha L_{\alpha \beta})=\overline{\nabla}_\beta H_g.
\end{align*}
Then we obtain
\begin{align}\label{Tk}
T_{k}(A_{g})(\overline{\nabla}u,\nu_{g})=&-T_{k}(A_{g})_{n+1}^\alpha u_{,\alpha}\nonumber\\
=&T_{k-1}(A_{g}^\top)_\beta^\alpha A_{n+1}^\beta  u_{,\alpha}=T_{k-1}(A_{g}^{\top})(\overline{\nabla}H_g,\overline{\nabla}u),
\end{align}
where we used \cite[Lemma 3(d)]{ChenSophie} in the second identity.

 Hence, by \eqref{eqn:mean_curvature} and \eqref{Tk}, we obtain
\begin{align*}
 & \int_{\mathbb{S}^{n}}T_{k}(A_{g})(\nabla u,\nu_{g})\ud\sigma_{g}\\
= & \int_{\mathbb{S}^{n}}T_{k}(A_{g})(\overline{\nabla}u+u_{\nu_{g}}\nu_{g},\nu_{g})\ud\sigma_{g}\\
= & \int_{\mathbb{S}^{n}}T_{k}(A_{g})(\overline{\nabla}u,\nu_{g})\ud\sigma_{g}+\int_{\mathbb{S}^{n}}u_{\nu_{g}}T_{k}(A_g)(\nu_{g},\nu_{g})\ud\sigma_{g}\\
= & \int_{\mathbb{S}^{n}}u_{\nu_{g}}\sigma_{k}(A_{g}^{\top})\ud\sigma_{g}=H_{g} \int_{\mathbb{S}^{n}}u\sigma_{k}(A_{g}^{\top})\ud\sigma_{g},
\end{align*}
where we used $T_{k}(A)(\nu_{g},\nu_{g})=\sigma_{k}(A_{g}^{\top})$. Therefore, this together with Lemma \ref{lem:Kadan-Warn -1} yields the desired assertion.
\end{proof}
\begin{thm}\label{thm:nonnegative_cone}
Let $g$ be a conformal metric of $g_0$ with $A_{g}\in\overline{\Gamma_{k}^{+}}$ for  $2\leq k\le n$, and satisfy
\[
\begin{cases}
\sigma_{k}(A_{g})=\mathrm{constant}\ge0 &\qquad \mathrm{~~in~~} \mathbb{S}_{+}^{n+1},\\
H_{g}=\mathrm{constant} & \qquad \mathrm{~~on~~} \mathbb{S}^{n}.
\end{cases}
\]
Then $g$ is Einstein.
\end{thm}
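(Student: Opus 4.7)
The plan is to run Lemma~\ref{lem:inequality for positive cone} against Lemma~\ref{lem:boundary final term}: the former produces a one-sided integral inequality whose right-hand side is a difference of two boundary terms, and Lemma~\ref{lem:boundary final term} evaluates the first of those terms under the constancy hypotheses so that the two exactly cancel. Then the inequality collapses to $0\le 0$, and the equality clause of Lemma~\ref{lem:inequality for positive cone} forces $g$ to be Einstein.

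More concretely, I write $g=u^{2}g_{0}$ on $\mathbb{S}^{n+1}_{+}$ with $u>0$. The hypotheses $A_{g}\in\overline{\Gamma_{k}^{+}}$ and $\sigma_{k}(A_{g})=\mathrm{const}\ge 0$ are exactly those needed to apply Lemma~\ref{lem:inequality for positive cone}, and after pulling the constant $H_{g}$ out of the mean-curvature integral this gives
$$
0\le \int_{\mathbb{S}^{n}} T_{k}(A_{g})(\nabla u,\nu_{g})\,\ud\sigma_{g} - \frac{n+1-k}{n+1}\sigma_{k}(A_{g})\,H_{g}\int_{\mathbb{S}^{n}} u\,\ud\sigma_{g}.
$$
Since both $\sigma_{k}(A_{g})$ and $H_{g}$ are constants, Lemma~\ref{lem:boundary final term} applies verbatim and delivers
$$
\int_{\mathbb{S}^{n}} T_{k}(A_{g})(\nabla u,\nu_{g})\,\ud\sigma_{g} = \frac{n+1-k}{n+1}\sigma_{k}(A_{g})\,H_{g}\int_{\mathbb{S}^{n}} u\,\ud\sigma_{g}.
$$
Substituting the second display into the first, the right-hand side is identically zero, so equality must hold throughout the chain of inequalities in the proof of Lemma~\ref{lem:inequality for positive cone}.

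Tracing the equality case back through Lemma~\ref{lem:inequality for positive cone} to Lemma~\ref{lem:Newton inequality} yields $E_{g}\equiv 0$ in $\mathbb{S}^{n+1}_{+}$, i.e., $g$ is Einstein, which is the desired conclusion.

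There is essentially no technical obstacle here, since all the heavy lifting is packaged into Lemmas~\ref{lem:Kadan-Warn -1}, \ref{lem:inequality for positive cone}, and~\ref{lem:boundary final term}. The only small point to check is that the argument remains valid at the boundary of the cone and in the degenerate case $\sigma_{k}(A_{g})=0$: when $\sigma_{k}(A_{g})=0$ the entire right-hand side of the target inequality is trivially zero (for any $H_{g}$), and when $A_{g}\in\overline{\Gamma_{k}^{+}}\setminus\Gamma_{k}^{+}$ the extension of the Newton-type inequality and its equality characterization to the closure is already built into the statement of Lemma~\ref{lem:inequality for positive cone}.
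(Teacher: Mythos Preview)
Your proof is correct and follows exactly the paper's approach: apply Lemma~\ref{lem:inequality for positive cone}, use Lemma~\ref{lem:boundary final term} to see that the two boundary terms cancel, and invoke the equality clause (via Lemma~\ref{lem:Newton inequality}) to conclude $E_{g}=0$. The only quibble is your aside that ``when $\sigma_{k}(A_{g})=0$ the entire right-hand side of the target inequality is trivially zero'': the $T_{k}$-boundary integral is not \emph{trivially} zero in that case, but your already-invoked Lemma~\ref{lem:boundary final term} does force it to vanish, so the argument is unaffected.
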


\begin{proof}
By Lemma \ref{lem:inequality for positive cone} and Lemma \ref{lem:boundary final term} 
we have 
\begin{align*}
0\le & \int_{\mathbb{S}^{n}}T_{k}(A_g)(\nabla u,\nu_{g})\ud\sigma_{g}-\frac{n+1-k}{n+1}\sigma_{k}(A_{g})H_{g}\int_{\mathbb{S}^{n}}u\ud \sigma_{g}
= 0
\end{align*}
and then from Lemma \ref{lem:Newton inequality} that $g$
is Einstein. 
\end{proof}

\begin{thm}\label{thm:negative_cone}
Let $g$ be a conformal metric of $g_0$ with $- A_{g}\in\overline{\Gamma_{k}^{+}}$ for  $2\leq k\le n$, and satisfy

\[
\begin{cases}
\sigma_{k}(-A_{g})=\mathrm{constant} \ge0 &\qquad \mathrm{~~in~~} \mathbb{S}_{+}^{n+1},\\
H_{g}=\mathrm{constant} & \qquad \mathrm{~~on~~} \mathbb{S}^{n}.
\end{cases}
\]
Then $g$ is Einstein.
\end{thm}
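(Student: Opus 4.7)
The plan is to mirror the argument for Theorem~\ref{thm:nonnegative_cone}, but invoking the negative-cone versions of the key lemmas. The starting point is Lemma~\ref{lem:inequality for negative cone}, which already gives
\[
0 \le -\int_{\mathbb{S}^{n}}T_{k}(-A_{g})(\nabla u,\nu_{g})\ud\sigma_{g}+\frac{n+1-k}{n+1}\sigma_{k}(-A_{g})H_{g}\int_{\mathbb{S}^{n}}u\ud\sigma_{g},
\]
with equality characterizing Einstein metrics (via Lemma~\ref{lem:Newton for negative cone}). So it suffices to establish the negative-cone analogue of Lemma~\ref{lem:boundary final term}, namely
\[
\int_{\mathbb{S}^{n}}T_{k}(-A_{g})(\nabla u,\nu_{g})\ud\sigma_{g}=\frac{n+1-k}{n+1}\sigma_{k}(-A_{g})H_{g}\int_{\mathbb{S}^{n}}u\ud\sigma_{g}.
\]

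To prove this identity, I would mimic the proof of Lemma~\ref{lem:boundary final term} with $A_{g}$ replaced by $-A_{g}$. The crucial observations remain valid in the negative cone: since $H_{g}$ is constant, Codazzi gives $(-A_{g})_{\beta(n+1)}=-\overline{\nabla}_{\beta}H_{g}=0$, so the Fermi-frame matrix of $-A_{g}$ is block-diagonal on $\mathbb{S}^{n}$; consequently $T_{k}(-A_{g})(\overline{\nabla}u,\nu_{g})=T_{k-1}(-A_{g}^{\top})(\overline{\nabla}H_{g},\overline{\nabla}u)=0$ and $T_{k}(-A_{g})(\nu_{g},\nu_{g})=\sigma_{k}(-A_{g}^{\top})$. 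Combining this with \eqref{eqn:mean_curvature} reduces the boundary integral to
\[
\int_{\mathbb{S}^{n}}T_{k}(-A_{g})(\nabla u,\nu_{g})\ud\sigma_{g}=H_{g}\int_{\mathbb{S}^{n}}u\sigma_{k}(-A_{g}^{\top})\ud\sigma_{g}.
\]

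The remaining step is the negative-cone analogue of Lemma~\ref{lem:Kadan-Warn -1}, i.e.\ $\int_{\mathbb{S}^{n}}u\sigma_{k}(-A_{g}^{\top})\ud\sigma_{g}=\frac{n+1-k}{n+1}\sigma_{k}(-A_{g})\int_{\mathbb{S}^{n}}u\ud\sigma_{g}$. I would apply Han's conformal-Killing identity with $A_{g}$ replaced by $-A_{g}$: since $\sigma_{k}$ and $T_{k}$ are $k$-homogeneous polynomials in their argument, both sides acquire a common factor $(-1)^{k}$, so the identity persists. Integrating over $\mathbb{S}_{+}^{n+1}$ with $X=-\nabla_{g_{0}}x_{n+2}$ (which equals $u\nu_{g}$ on $\mathbb{S}^{n}$) and using $\sigma_{k}(-A_{g})={\rm const}$ yields exactly the desired identity.

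Putting the three ingredients together gives $0\leq 0$, forcing equality throughout, so by Lemma~\ref{lem:Newton for negative cone} we conclude $E_{g}=0$, i.e.\ $g$ is Einstein. The main ``obstacle'' is essentially bookkeeping: one has to make sure the sign changes in Codazzi, in $T_{k}(-A_{g})$, and in the Han identity all propagate consistently, and that the boundary block-diagonal structure (hence $T_{k}(-A_{g})(\nu_{g},\nu_{g})=\sigma_{k}(-A_{g}^{\top})$) is preserved when we pass from $A_{g}$ to $-A_{g}$; there is no genuinely new analytic input required beyond the lemmas already proved.
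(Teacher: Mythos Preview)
Your proposal is correct and follows essentially the same route as the paper: both combine Lemma~\ref{lem:inequality for negative cone} with the $-A_g$ version of Lemma~\ref{lem:boundary final term} (and its ingredient Lemma~\ref{lem:Kadan-Warn -1}), noting that $\sigma_k$ and $T_k$ are $k$-homogeneous so the identities carry over. The paper records this by pulling out the factor $(-1)^{k+1}$ and applying Lemma~\ref{lem:boundary final term} for $A_g$ directly (legitimate since $\sigma_k(-A_g)$ constant forces $\sigma_k(A_g)$ constant), while you re-prove the lemma for $-A_g$; the content is the same.
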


\begin{proof}
Using Lemma \ref{lem:Newton for negative cone} and Lemma \ref{lem:boundary final term} with $-A_g$ in place of $A_g$,
we have
\begin{align*}
0\le & -\int_{\mathbb{S}^{n}}T_{k}(-A_g)(\nabla u,\nu_{g})\ud\sigma_{g}+\frac{n+1-k}{n+1}\sigma_{k}(-A_{g})H_{g}\int_{\mathbb{S}^{n}}u\ud \sigma_{g}\\
= & (-1)^{k+1}\frac{n+1-k}{n+1}\sigma_{k}(A_{g})H_{g}\int_{\mathbb{S}^{n}}u\sigma_{g}+\frac{n+1-k}{n+1}\sigma_{k}(-A_{g})H_{g}\int_{\mathbb{S}^{n}}u\ud \sigma_{g}\\
= & 0.
\end{align*}
Then it follows from Lemma \ref{lem:Newton for negative cone} that $g$
is Einstein. 
\end{proof}

\begin{proof}
[\textbf{Proof of Theorem \ref{thm:obata for k-curvature}}] Using a stereographic projection $\pi:\Sp^{n+1}\backslash\{S\} \to \mathbb{R}^{n+1}$ from the south pole $S$, we identify $(\B^{n+1},|\ud x|^2)$ and the hemisphere $\Sp^{n+1}_+$ equipped with the round metric $g_{\Sp^{n+1}}$.

We abuse the notation $g$ for a while and set $g=|\ud x|^2$. By Theorems \ref{thm:nonnegative_cone} and \ref{thm:negative_cone}, we know that  the metric $\tilde g:=\phi^{-2}|\ud x|^2$ is Einstein, where $\phi$ is a positive smooth  function in $\overline{\B^{n+1}}$.  Then we have
\begin{align*}
0=\tilde E=(n-1)\phi^{-1}(\nabla^2 \phi-\frac{\Delta \phi}{n+1}g),
\end{align*}
that is,
\begin{equation}\label{trace free}
\phi_{ij}=\frac{\Delta \phi}{n+1} \delta_{ij}.
\end{equation}
Taking derivatives yields $\nabla_i\Delta \phi=\phi_{ijj}=\frac{1}{n+1}\nabla_i\Delta \phi$. This gives $ \nabla \Delta \phi=0$ and then $\Delta \phi=\mathrm{const.}$  
Hence, up to a rigid motion of $\R^{n+1}$ we obtain $\phi(x)=a|x-y|^2+c$ for  $y\in \mathbb{R}^{n+1}$ and  $a,c \in \mathbb{R}$.

A direct computation yields
\begin{align*}
\tilde A_{ij}=(\log \phi)_{ij}+(\log \phi)_i(\log \phi)_j-\frac{1}{2}|\nabla \log \phi|^2 g_{ij}=2ac \tilde g \qquad \mathrm{~~in~~} \B^{n+1}.
\end{align*} 
Thus we obtain
$$\tilde \sigma_k=\binom{n+1}{k} (2ac)^k$$
and on $\partial \B^{n+1}$, 
$$\tilde H=a|x-y|^2+c-2a(x-y)\cdot x=a(|y|^2-1)+c.$$
Notice that there are three unknown variables (i.e. $a,c$ and $|y|$) in the above two equations.

We divide our discussion into three cases:
\begin{enumerate}[(i)]
\item If $\tilde \sigma_k=0$, then we have either $c=0,a=1, |y|>1$ with $\tilde H=|y|^2-1$, due to the boundness of $g$,  or  $a=0,\tilde H=c\equiv \phi>0$. We aim to show that two metrics $g_1=|x-y|^{-4}|\ud x|^2$ and $g_2=\tilde H^{-2} |\ud x|^2$ are conformally equivalent. Indeed,  we define a conformal transformation on $\overline{\B^{n+1}}$ by
\begin{equation}\label{transformation:conf}
x=\varphi_y(z)=y+\frac{(|y|^2-1)(z-y)}{|z-y|^2},
\end{equation}
 which is a conformal transformation on $\overline{\B^{n+1}}$ with the property that
$$\varphi_y^\ast(|\ud x|^2)=\frac{(|y|^2-1)^2}{|z-y|^4} |\ud z|^2=(|y|^2-1)^2 g_1=\tilde H^2 g_1.$$
This implies that  $\varphi_y^\ast(g_2)=g_1$, so we just take $g_{\mathrm{c}}=g_2$.

\item If $\tilde \sigma_k>0, \tilde H\in \mathbb{R}$, then $a,c>0, y \in \mathbb{R}^{n+1}$  by the positivity of $\phi$. So we may take $y=0$ and $g_{\mathrm{c}}, \ve$ as desired.
\item If $\sigma_k(-\tilde A)=(-1)^k \binom{n+1}{k} (2ac)^k>0$   and $\sigma_1(-\tilde A)=-2(n+1) ac>0$, then $ac<0$.

(iii$_a$) When $|y| \leq 1$. If $a>0$, then $c<0$, however $\phi(y)<0$, which is impossible. If $a<0$, then $c>0$. Since $\phi>0$ in $\overline{\B^{n+1}}$, we obtain
\begin{align*}
\min_{\overline{\B^{n+1}}}\phi(x)=a(1+|y|)^2+c>0\quad \Longrightarrow \quad |y|<\sqrt{-\frac{c}{a}}-1.
\end{align*}
Then it gives
\begin{align*}
\tilde H=a(|y|^2-1)+c>2\sqrt{-ac}=\left(\frac{2 \sigma_k(-\tilde A)}{\binom{n+1}{k}}\right)^{\frac{1}{2k}}.
\end{align*}

(iii$_b$) When $|y|>1$, we distinguish it into two cases. 

($b_1$) If $a<0$, then $c>0$, and the  same argument above goes through. 

($b_2$) If $a>0$, then $c<0$. Since $\phi>0$ in $\overline{\B^{n+1}}$, we obtain
\begin{align*}
\min_{\overline{\B^{n+1}}}\phi(x)=a(|y|-1)^2+c>0 \quad \Longrightarrow \quad |y|>\sqrt{-\frac{c}{a}}+1.
\end{align*}
This implies
$$\tilde H=a(|y|^2-1)+c>2\sqrt{-ac}=\left(\frac{2 \sigma_k(-\tilde A)}{\binom{n+1}{k}}\right)^{\frac{1}{2k}}.$$
 With the same conformal transformation $\varphi_y$ as in \eqref{transformation:conf}, a direct computation yields
\begin{align*}
&\varphi_y^\ast\left((a|x-y|^2+c)^{-2}|\ud x|^2\right)\\
=&\left(a\frac{(|y|^2-1)^2}{|z-y|^2}+c\right)^{-2}\frac{(|y|^2-1)^2}{|z-y|^4} |\ud z|^2\\
=&\left(a(|y|^2-1)+\frac{c}{|y|^2-1} |z-y|^2\right)^{-2} |\ud z|^2.
\end{align*}
If we let $c'=a(|y|^2-1)>0,a'=\frac{c}{|y|^2-1}<0$, then $a'c'=ac$ and $a'(|y|^2-1)+c'=c+a(|y|^2-1)$. In this sense, these two metrics obtained in cases ($b_1$) and ($b_2$) are equivalent up to a conformal transformation on $\overline{\B^{n+1}}$.

In summary, the canonical conformal metric reduces to the case that  $a<0$ and $c>0$, $\tilde g=(a|x-y|^2+c)^{-2}|\ud x|^2$ for all $y \in \mathbb{R}^{n+1}$. Due to this reason,  we may take $y=0$ and $g_{\mathrm{c}}, \ve$ as in (iii).
\end{enumerate}
\end{proof}

\end{document}